\documentclass[12pt,a4paper]{amsart}
\usepackage{mathrsfs}
\usepackage{latexsym}
\usepackage{amsmath,amsthm,amscd,amsfonts,amssymb}
\usepackage{mathptmx,amssymb,color}

\textwidth=16.00cm \textheight=21.5cm \topmargin=0.00cm
\oddsidemargin=0.00cm \evensidemargin=0.00cm \headheight=14.4pt
\headsep=1cm \numberwithin{equation}{section}
\hyphenation{semi-stable} \emergencystretch=10pt


\newtheorem{theorem}{Theorem}[section]
\newtheorem{lemma}[theorem]{Lemma}

\newtheorem{corollary}[theorem]{Corollary}

\theoremstyle{definition}
\newtheorem{definition}[theorem]{Definition}

\newtheorem{remark}[theorem]{Remark}

\newcommand\mAss{\operatorname{mAss}}
\newcommand\Ass{\operatorname{Ass}}

\newcommand\Rad{\operatorname{Rad}}

\begin{document}

\title[Symbolic powers and generalized-parametric decomposition of monomial ideals]
{Symbolic powers and generalized-parametric decomposition of monomial ideals on regular sequences}%
\author[Adeleh Azari,  Simin Mollamahmoudi and Reza Naghipour]
{Adeleh Azari,  Simin Mollamahmoudi and Reza Naghipour$^*$\\\\\\
\vspace*{0.5cm}Dedicated to Professor Peter Schenzel}%
\address{Department of Mathematics, University of Tabriz,
Tabriz, Iran, and, School of Mathematics, Institute for Research in Fundamental
Sciences (IPM), P.O. Box: 19395-5746, Tehran, Iran.}%
\email{adeleh\_azari@yahoo.com (Adeleh Azari)}
\email{mahmoudi.simin@yahoo.com (Simin Mollamahmoudi)}

\email{naghipour@ipm.ir (Reza Naghipour)} \email  {naghipour@tabrizu.ac.ir (Reza Naghipour)}

\thanks{ 2010 {\it Mathematics Subject Classification}: 13B22.\\
This research was in part supported by a grant from IPM. \\
$^*$Corresponding author: e-mail: {\it naghipour@ipm.ir} (Reza Naghipour)}%

\subjclass{}%
\keywords{Associated prime, generalized-parametric decomposition, integral closure,  monomial ideal, normal ideal, regular sequence, squarefree.}%

\begin{abstract}
Let $R$ be a commutative Noetherian  ring and let ${\bf x} :=x_1,\ldots,x_d$ be a regular  $R$-sequence contained in the Jacobson radical of  $R$. An   ideal  $I$ of $R$
 is said to be a monomial ideal with respect to ${\bf x}$ if it is generated by a set of monomials $x_1^{e_1}\ldots x_d^{e_d}$. It is shown that, if ${\bf x}R$ is a prime  ideal of 
 $R$, then each monomial ideal $I$ has
 a canonical and unique decomposition   as an irredundant   finite intersection   of primary ideals of the form  $x^{e_1}_{\tau(1)}R+\dots+x^{e_s}_{\tau(s)}R$, where  $\tau$
  is a permutation of $\{1,\ldots,d\}$, $s\in\{1,\ldots,d\}$ and ${e_1},\ldots,{e_s}$ are the positive integers.  This generalizes and provides a short proof of the main results
  of \cite{HMRS, HRS}.  Also, we show that for every integer $k\geq1$, $I^{(k)}=I^k$, if and only if $\Ass_R R/{I^k }\subseteq \Ass_R R/{I}$, whenever $I$ is a  squarefree
   monomial ideal, where $I^{(k)}$ is the $k$th symbolic power of $I$. Moreover, in this circumstance it is shown that all powers of $I$ are integrally closed.
\end{abstract}
\maketitle
\section {Introduction}
Let $R$ be a commutative Noetherian ring, and let ${\bf x}
:=x_1,\ldots,x_d$ be a regular  $R$-sequence.  A {\it monomial} with
respect to ${\bf x}$ is a power product
$x_1^{e_1},\ldots,x_d^{e_d}$, where ${e_1},\ldots,{e_d}$ are
non-negative integers ( so monomial is  either  a non-unit  or  the
element 1), and a {\it monomial ideal} is a
 proper ideal generated by monomials.

Monomial ideals are important in  several areas of current research in commutative Noetherian rings, and they have been studied  in their own right in several papers (for example
  see \cite{EH, HMRS, KS}),   so many interesting  results are proved about such ideals.  In particular, recently Heinzer et al. (see \cite[Theorem 4.9]{HMRS}
  and  \cite [Theorems 4.1 and 4.10]{HRS})  showed that each monomial ideal $I$ with $\Rad(I)=\Rad({\bf x}R)$ has a unique
generalized-parametric decomposition. Their proof relies some defnitions and preliminary results; and several pages are devoted to a proof of that theorem. The first main
purpose of  the present paper is
to  generalize and  present a much short proof of  Heinzer et al. theorems, using somewhat more elementary methods than those used by them.  More precisely, as
 a main result of the Section 2, we shall show that:

\begin{theorem}\label{thm1}
Let  $I$  denote a  monomial ideal of $R$ with respect to a regular $R$-sequence ${\mathbf{x}}:=x_1,\ldots,x_d$ contained   in the Jacobson radical of $R$. Then $I$ has a
 canonical and unique decomposition as an irredundant finite intersection of  primary  ideals of the form $x^{e_1}_{\tau(1)}R+\dots+x^{e_s}_{\tau(s)}R$, whenever 
 the ideal ${\bf x}R$ is prime in $R$.  Here $\tau$ is a
  permutation of $\{1,\ldots,d\}$, $1\leq s\leq d$ and ${e_1},\ldots,{e_s}$ are positive integers.
\end{theorem}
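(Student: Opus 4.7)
The plan is to prove existence by induction on the number of monomial generators of $I$, and then deduce uniqueness from irreducibility of the resulting components. The starting point is a key lemma asserting that, for any positive integers $e_1,\ldots,e_s$ and distinct indices $i_1,\ldots,i_s\in\{1,\ldots,d\}$,
\[
(x_{i_1}^{e_1}\cdots x_{i_s}^{e_s})R \;=\; \bigcap_{j=1}^{s}(x_{i_j}^{e_j})R.
\]
Only the inclusion $\supseteq$ is nontrivial: given $r$ in the intersection, write $r=x_{i_1}^{e_1}r'$, and use that $x_{i_1}^{e_1}$ is regular modulo each $(x_{i_j}^{e_j})R$ for $j\geq 2$ (since powers and permutations of a regular sequence contained in the Jacobson radical remain regular) to deduce that $r'\in (x_{i_j}^{e_j})R$ for $j\geq 2$; induction on $s$ then finishes.

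For the inductive step on the number of generators, if $I=(m)+J$ with $J=\bigcap J_\alpha$ already decomposed by induction into components of the required shape, I would use the distributive identity
\[
(m)+\bigcap_\alpha J_\alpha \;=\; \bigcap_\alpha\bigl((m)+J_\alpha\bigr).
\]
After applying the key lemma to $(m)$ and re-distributing, each summand $(m)+J_\alpha$ becomes an intersection of ideals generated by pure powers of distinct variables, which one may simplify to the form $(x_{\tau(1)}^{f_1},\ldots,x_{\tau(t)}^{f_t})R$ by retaining the minimum exponent whenever a variable appears twice. \textbf{The main obstacle is this distributive identity.} I expect it to follow from a ``monomial-by-monomial'' check once one knows that the lattice of $\mathbf{x}$-monomial ideals is distributive, which in turn uses the regular-sequence hypothesis together with the Krull intersection theorem ($\mathbf{x}$ lying in the Jacobson radical) to rule out hidden relations among the monomials $x_1^{e_1}\cdots x_d^{e_d}$.

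Finally, each candidate component $Q=(x_{\tau(1)}^{e_1},\ldots,x_{\tau(s)}^{e_s})R$ has radical $P=(x_{\tau(1)},\ldots,x_{\tau(s)})R$. To check that $Q$ is $P$-primary, I would first show $P$ is prime by leveraging the hypothesis that $\mathbf{x}R$ is prime---through the isomorphism $\mathrm{gr}_{\mathbf{x}R}(R)\cong (R/\mathbf{x}R)[X_1,\ldots,X_d]$ identifying the associated graded with a polynomial ring over the domain $R/\mathbf{x}R$---and then note that $x_{\tau(1)}^{e_1},\ldots,x_{\tau(s)}^{e_s}$ is itself a regular sequence generating $Q$, so $\mathrm{Ass}_R(R/Q)=\{P\}$. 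Uniqueness of the decomposition follows from the irreducibility of each $Q$: the quotient $R/Q$ admits a unique minimal nonzero ideal (its socle with respect to $P$, generated by the image of $x_{\tau(1)}^{e_1-1}\cdots x_{\tau(s)}^{e_s-1}$), so $Q$ cannot be written nontrivially as an intersection, and the irredundant intersection of such irreducible ideals is canonically determined by $I$.
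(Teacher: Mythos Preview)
Your existence argument takes a genuinely different route from the paper's. The paper's key device is Lemma~2.2: if a minimal generator $u_1$ is not a pure power, write $u_1=vw$ with $\gcd(v,w)=1$ and split $I=(v,u_2,\ldots,u_r)\cap(w,u_2,\ldots,u_r)$, then iterate. You instead decompose the principal ideal $(m)$ as $\bigcap_j(x_{i_j}^{e_j})$ and push the sum $(m)+J$ through an already-obtained intersection via distributivity. Both approaches ultimately rest on the membership criterion from Kiyek--St\"uckrad (a monomial lies in a monomial ideal iff some monomial generator divides it), so the distributive law you flag as the main obstacle is indeed available. The paper's route is slightly more economical in that Lemma~2.2 only needs the $\mathrm{lcm}$-formula for intersections, not full distributivity; for primariness the paper simply cites Sedghi's result, whereas your associated-graded sketch is headed in the right direction but would need more detail (the bare implication ``$Q$ is generated by a regular sequence $\Rightarrow \Ass_R(R/Q)$ is a singleton'' is false without further input).

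Your uniqueness argument, however, has a genuine gap. The claim that $R/Q$ has a \emph{unique minimal nonzero ideal} fails whenever $s<d$: take $R=k[[x_1,x_2]]$ and $Q=(x_1^2)$; then in $R/Q$ the chain $(\bar x_1)\supsetneq(\bar x_1x_2)\supsetneq(\bar x_1x_2^2)\supsetneq\cdots$ never terminates, so there is no minimal nonzero ideal at all. What is true is that $R_P$ is regular local (its maximal ideal is generated by the regular sequence $x_{\tau(1)},\ldots,x_{\tau(s)}$), so $QR_P$ is irreducible; combined with $Q$ being $P$-primary this does make $Q$ irreducible in $R$. But even granting irreducibility of every component, irredundant irreducible decompositions are \emph{not} unique in general Noetherian rings: in $k[[x,y]]$ one has $(x^2,xy)=(x)\cap(x^2,y)=(x)\cap(x^2,x+y)$, two distinct such decompositions. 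The paper handles uniqueness by a direct combinatorial argument (if $\mathfrak q'_j\nsubseteq\mathfrak q_i$ for every $j$, pick a pure-power witness in each $\mathfrak q'_j\setminus\mathfrak q_i$ and take their $\mathrm{lcm}$ to obtain a contradiction). Within your framework, the clean fix is to reuse the distributivity you already need for existence: in any distributive lattice, an irredundant meet-decomposition into meet-irreducible elements is unique, and the generalized-parameter ideals are exactly the meet-irreducibles of the monomial-ideal lattice (this is the content of the paper's Corollary~2.5).
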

The result in  Theorem   \ref{thm1} is proved in Theorem \ref{thm3}.  Pursuing this point of view further we establish  that a  monomial ideal is irreducible  if and only if it is
 a generalized-parameter ideal.

One of our tools for proving Theorem \ref{thm1} is the following:
\begin{lemma}\label{lem1}
Let  $I$  be a  monomial ideal of $R$ with respect to the regular  sequence ${\bf x} :=x_1,\ldots,x_d$, and let $\{u_1,\ldots,u_r\}$  be a minimal set of monomial
generators with respect to ${\bf x}$ of $I$.
Suppose that $u_1=vw$, where $v$ and $w$ are co-prime monomials   with respect to ${\bf x}$ and $v\neq1\neq w$. Then
$$ I=(vR+u_2R+\dots+u_rR)\cap(wR+u_2R+\dots +Ru_r).$$
\end{lemma}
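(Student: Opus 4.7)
The plan is to prove the two inclusions separately. The inclusion $I \subseteq J \cap K$, where $J := vR+u_2R+\cdots+u_rR$ and $K := wR+u_2R+\cdots+u_rR$, is immediate: $u_1 = vw$ belongs both to $vR$ and to $wR$, while each $u_i$ with $i \geq 2$ is a generator of both $J$ and $K$.

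For the reverse inclusion, set $M := u_2R + \cdots + u_rR$, so that $J = vR + M$, $K = wR + M$, and $I = vwR + M$. Given $z \in J \cap K$, I would write $z = v\alpha + m_1 = w\gamma + m_2$ with $m_1, m_2 \in M$. Subtracting gives $v\alpha - w\gamma \in M$, and since $z = v\alpha + m_1$, it suffices to prove that $v\alpha \in vwR + M$.

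The substantive step is a coprimality-cancellation argument. Because $v$ and $w$ have disjoint supports $A, B \subseteq \{x_1, \ldots, x_d\}$ and ${\bf x} = x_1, \ldots, x_d$ is a regular $R$-sequence, the two monomials $v$ and $w$ themselves form a regular $R$-sequence: $v$ is a product of regular elements and hence regular, and an iteration of the regular-sequence property for ${\bf x}$ shows that $w = \prod_{j \in B} x_j^{b_j}$ is a nonzerodivisor modulo $vR$. In particular, $vR \cap wR = vwR$ in $R$, and divisibility of $v\alpha$ by $w$ forces divisibility of $\alpha$ by $w$. The plan is to use this cancellation property together with the minimality of $\{u_1, \ldots, u_r\}$ -- which forbids any $u_i$ with $i \geq 2$ from dividing $u_1 = vw$ -- to rewrite the identity $v\alpha - w\gamma = \sum_{i \geq 2} u_i c_i$ so that $v\alpha$ exhibits itself as an element of $vwR + M$.

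The main obstacle I anticipate is that the quotient $R/M$ does not in general inherit the regular-sequence property of $v, w$, so one cannot apply $vR \cap wR = vwR$ directly modulo $M$. My approach to circumvent this is to split each $u_i$ into three coprime monomial pieces -- one using only variables indexed by $A$, one only by $B$, and one by the complement of $A \cup B$ -- and to track these pieces through the rearrangement; the minimality of the generating set should then force the residual error terms to remain inside $M$. This exponent-level bookkeeping, grounded in the polynomial-ring-style divisibility that the regular sequence ${\bf x}$ imposes on its monomials, will be the principal technical burden of the proof.
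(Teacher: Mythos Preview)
Your forward inclusion $I \subseteq J \cap K$ is correct and matches the paper. For the reverse inclusion, however, you have correctly located the obstacle but not overcome it: the cancellation $vR \cap wR = vwR$ holds in $R$, not in $R/M$, and your proposed ``exponent-level bookkeeping'' via a tripartite splitting of the $u_i$ is a hope, not an argument. Nothing you write explains how, from $v\alpha - w\gamma \in M$ with $\alpha,\gamma$ arbitrary (non-monomial) ring elements, one extracts $v\alpha \in vwR + M$. The minimality of the generating set, which you invoke, is a red herring: the paper's proof does not use it at all.

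The paper sidesteps the difficulty by quoting a structural fact: by \cite[Proposition~1]{KS}, the intersection of two monomial ideals with respect to a regular sequence in the Jacobson radical is the monomial ideal generated by the pairwise $\verb"lcm"$'s of their monomial generators. Applied to $J$ and $K$, this gives $J\cap K$ generated by $\verb"lcm"(v,w)=vw=u_1$ together with elements $\verb"lcm"(v,u_i)$, $\verb"lcm"(u_i,w)$, $\verb"lcm"(u_i,u_j)$, each a multiple of some $u_i$ with $i\geq 2$; hence $J\cap K\subseteq I$ in one line.

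If you insist on an element-level argument, the clean route still imports the \cite{KS} toolkit: once one knows that $J\cap K$ is itself a monomial ideal and that any monomial in a monomial ideal is divisible by one of its monomial generators (\cite[Corollary~3]{KS}), it suffices to test monomials $m\in J\cap K$. Then either some $u_i$ with $i\geq 2$ divides $m$, so $m\in M\subseteq I$, or else $v\mid m$ and $w\mid m$, whence $vw\mid m$ by coprimality and $m\in u_1R\subseteq I$. Either way you are leaning on the non-trivial foundational results about monomial ideals over regular sequences; a bare-hands proof that avoids them amounts to reproving those results, which is precisely the ``principal technical burden'' you left undone.
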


Let $J$ be an ideal of $R$. Recalling that an element $x \in R$ is
said to be {\it integral over} $J$ if there exist an integer
$n\geq1$ and elements $c_i\in J^i$ for $i=1, \dots, n$ such that
$$x^n+c_1x^{n-1}+\dots+c_n=0.$$

The set of all elements that are integral over $J$ is called the {\it integral closure} of $J$, denoted by $\bar{J}$. It is well known that $\bar{J}$ is an ideal of $R$ and that
 $J \subseteq \bar{J}$. If $J=\bar{J}$, then $J$ is called {\it integrally closed}; and
we say that $J$ is {\it normal} if for every integer $n\geq1$, $J^n$ is  integrally closed.

For any   regular $R$-sequence ${\bf x} :=x_1,\ldots,x_d$ contained   in the Jacobson radical of $R$, let ${\bf x}R$ be a prime ideal of $R$. In the Section 3,  we will obtain some
results  about the symbolic powers of a  certain monomial ideal in $R$.  In this section,  among  other things, we prove the following theorem.
\begin{theorem}\label{thm2}
Let  ${\bf x} =x_1,\ldots,x_d$ be a regular $R$-sequence  contained
in the Jacobson radical of $R$, let ${\bf x}R$ be a prime ideal of
$R$ and let  $I$  be a  monomial ideal of $R$ with respect to ${\bf
x}$. Then, for any integer $k\geq1$, the $k$th   symbolic power
$I^{(k)}$ of $I$  is a monomial ideal, and if $I$ is  squarefree,
then $I^{(k)}=I^k$, if and only if
 $\Ass_R R/{I^k }\subseteq \Ass_R R/I$. Moreover, in this situation  $I$  is normal and $I^k=\bigcap\limits_{\mathfrak{p}\in \mAss_R R/I} \mathfrak{p}^k$.
\end{theorem}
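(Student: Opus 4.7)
My plan is to leverage Theorem \ref{thm1} applied to both $I$ and to each $I^k$, combined with the classical fact that the associated graded ring of an ideal generated by a regular sequence is a polynomial ring, plus a local-to-global argument at the prime ${\bf x}R$ for normality. First I would establish that $I^{(k)}$ is monomial: applying Theorem \ref{thm1} to the monomial ideal $I^k$ yields an irredundant decomposition $I^k = \bigcap_j Q_j$ in which each $Q_j$ is monomial primary with radical $\mathfrak{p}_j$ generated by a subset of ${\bf x}$; since $\Min_R R/I^k = \Min_R R/I$ and isolated primary components are uniquely determined, $I^{(k)}$ is the intersection of those $Q_j$ with $\mathfrak{p}_j \in \Min_R R/I$, hence monomial. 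For the squarefree equivalence, when $I$ is squarefree the canonical exponents in Theorem \ref{thm1} must all equal $1$, so the primary components of $I$ are themselves prime and $\Ass_R R/I = \Min_R R/I$; thus $I^{(k)} = I^k$ is equivalent to $I^k$ having no embedded primes, which, using $\Min \subseteq \Ass$, is equivalent to $\Ass_R R/I^k \subseteq \Ass_R R/I$.

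For the formula $I^k = \bigcap_{\mathfrak{p} \in \mAss_R R/I} \mathfrak{p}^k$, each minimal prime $\mathfrak{p} = (x_{i_1},\ldots,x_{i_s})R$ of $I$ is generated by a subsequence of ${\bf x}$, so its associated graded ring $\bigoplus_n \mathfrak{p}^n/\mathfrak{p}^{n+1}$ is isomorphic to the polynomial ring $(R/\mathfrak{p})[T_1,\ldots,T_s]$ and is a domain. A short argument using this domain structure forces $\mathfrak{p}^k$ to be $\mathfrak{p}$-primary, so $\mathfrak{p}^{(k)} = \mathfrak{p}^k$. Since $I$ is squarefree, $IR_{\mathfrak{p}} = \mathfrak{p} R_{\mathfrak{p}}$ for each such $\mathfrak{p}$, and therefore the isolated $\mathfrak{p}$-primary component of $I^k$ is precisely $\mathfrak{p}^k$; intersecting yields $I^{(k)} = \bigcap \mathfrak{p}^k$, which equals $I^k$ by hypothesis.

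The hard part will be normality. Since the intersection of integrally closed ideals is integrally closed, I would reduce to showing each $\mathfrak{p}^k$ is integrally closed. The key observation is that $R_{{\bf x}R}$ is a regular local ring of dimension $d$ (its maximal ideal is generated by the $d$ elements of the regular sequence ${\bf x}$, and the height of ${\bf x}R$ is exactly $d$), while $\mathfrak{p} R_{{\bf x}R}$ is generated by part of a regular system of parameters; the standard analysis of monomial ideals in regular local rings then yields that $(\mathfrak{p} R_{{\bf x}R})^k$ is integrally closed. Combining this with the identity $\mathfrak{p}^k R_{{\bf x}R} \cap R = \mathfrak{p}^k$—valid because $\mathfrak{p}^k$ is $\mathfrak{p}$-primary with $\mathfrak{p} \subseteq {\bf x}R$—and the preservation of integrality under localization lifts the conclusion back to $R$. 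This last integrality step is the main technical obstacle; the primary-decomposition bookkeeping in the earlier parts is routine once Theorem \ref{thm1} is available.
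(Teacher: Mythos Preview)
Your treatment of the first three parts---that $I^{(k)}$ is monomial, the squarefree equivalence, and the formula $I^{(k)}=\bigcap \mathfrak{p}^k$---matches the paper's argument essentially step for step (the paper cites Kaplansky for $\mathfrak{p}^{(k)}=\mathfrak{p}^k$, which is exactly your associated-graded-domain observation, and it packages the intersection formula via a short lemma $(\bigcap_i\mathfrak{p}_i)^{(n)}=\bigcap_i\mathfrak{p}_i^{(n)}$). The genuine divergence is in the normality step. The paper stays inside the monomial machinery: it invokes results of Kiyek--St\"uckrad to the effect that $\overline{I^k}$ is itself a monomial ideal and that membership of a monomial $m$ in $\overline{I^k}$ is detected by $m^l\in I^{kl}$ for some $l$; then, using $I^k=\bigcap_{\mathfrak{p}} \mathfrak{p}^k$, it verifies by a direct exponent count that $m^l\in\mathfrak{p}^{kl}$ forces $m\in\mathfrak{p}^k$. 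Your route is different and more structural: reduce to showing each $\mathfrak{p}^k$ is integrally closed, observe that the relevant localization is a regular local ring in which powers of the maximal ideal are valuation ideals for the order valuation (hence integrally closed), and contract back via $\mathfrak{p}^k R_{\mathfrak{p}}\cap R=\mathfrak{p}^k$. This is correct and bypasses the Kiyek--St\"uckrad input entirely; the paper's argument, by contrast, stays closer to the monomial-combinatorial theme and needs nothing about regular local rings. As a minor simplification of your version, you can localize directly at $\mathfrak{p}$ rather than detouring through ${\bf x}R$: since $\mathfrak{p}$ is generated by a regular sequence of length equal to its height, $R_{\mathfrak{p}}$ is already a regular local ring with maximal ideal $\mathfrak{p}R_{\mathfrak{p}}$.
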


Here, for a positive integer $n$, $I^{(n)}$ denotes the $n$th symbolic power of $I$,  which is defined  as the intersection of the primary components of $I^n$  corresponding to the minimal
 associated   primes of $I$.

Throughout this paper all rings are commutative and Noetherian, with  identity, unless otherwise specified. We shall use $R$ to denote
 such a ring and $I$ an ideal of $R$.    The {\it radical} of $I$, denoted by $\Rad(I),$
 is defined to be the set
 $\lbrace x\in I: x^n\in I\,\,  \text {for some}\,\,  n\in \mathbb{N}\rbrace.$  Moreover,  we denote by $\mAss_RR/I$ the set of minimal prime ideals of $\Ass_RR/I$.  We say that
 $x_1,\ldots,x_d$ form an $R$-{\it sequence} (of elements of $R$) precisely when $x_1R+\dots+x_dR\neq R$ and  for each $i=1, \dots, d$, the element $x_i$ is a non-zerodivisor on the $R$-module
 $R/x_1R+\dots+x_{i-1}R$.

  For any unexplained notation and terminology we refer the reader to \cite{BHe} or \cite{Na}.

\section{Generalized-Parametric Decomposition of Monomial Ideal}
Let $R$ be a  Noetherian ring, let ${\bf x} =x_1,\ldots,x_d$ be a regular  $R$-sequence contained   in the Jacobson radical of $R$. In this section we show that that if $I$ is a monomial ideal
 of $R$ with respect to ${\bf x}$ then $I$ has a unique generalized-parametric decomposition of primary ideals. The main result of this section is Theorem 2.3, which extends and provides a
 short   proof of the main results of \cite{HMRS, HRS}. We begin with:
\begin{definition}\label{def1}
Let $R$ be a  Noetherian ring and let ${\bf x} :=x_1,\ldots,x_d$ be a regular  $R$-sequence of elements of $R$.  Then
\begin{enumerate}
  \item[(i)] An element $m$ of $R$ is called {\it monomial with respect to} ${\bf x}$ if there exist non-negative integers ${e_1},\ldots,{e_d}$ such that $m= x^{e_1}_1 \ldots  x^{e_d}_d$.
  In view of   \cite[Theorem 16.2]{Ma} it is easy to see that ${e_1},\ldots,{e_d}$ are determined uniquely by $m$.
  \item[(ii)]  Suppose  that $\mathscr{M}$ denotes the set of all monomials of $R$ with respect to ${\bf x}$. An ideal $I$ of $R$ is called {\it monomial with respect to} ${\bf x}$ if
   it is generated by elements of $\mathscr{M}$. In particular, the zero  ideal and $R$ itself are monomial ideals.
  \item[(iii)] Let $u=x^{e_1}_1\ldots x^{e_d}_{d}$ and $v=x^{t_1}_1\ldots x^{t_d}_{d}$ be two  monomials with respect to ${\bf x}$. For all $i\in\{1,\ldots,d\}$, we set $k_i=\min\{e_i,t_i\}$
   and $s_i=\max\{e_i,t_i\}$. Then, we define
      \[ \verb"gcd"(u,v)= x^{k_1}_1\ldots x^{k_d}_{d},\quad \verb"lcm"(u,v)= x^{s_1}_1\ldots x^{s_d}_{d},
      \]
      the {\it greatest  common divisor} resp. the {\it least common multiple} of $u$ and $v$. We say that $u$ and $v$ are co-prime if $ \verb"gcd"(u,v)=1$.
\item[(iv)] Suppose that $s$ is an integer such that $1\leq s\leq d,$  let $\tau$ be a  permutation  of $\{1,\ldots,d\}$, and let ${e_1},\ldots,{e_d}$  be positive integers. Then the monomial
 ideal generated by $x^{e_1}_{\tau(1)},\ldots, x^{e_s}_{\tau(s)}$ is called a  {\it generalized-parametric ideal}.
\item[(v)] A monomial ideal $I$ of $R$ with respect to  ${\bf x}$   is called {\it reducible} if there exist  two monomial  ideals $I_1$, $I_2$ of $R$ with respect to  ${\bf x}$ such that
$I=I_1\cap I_2$ and $I\neq I_1, I_2$.
    It  is called {\it  irreducible} if it   is not reducible.
\end{enumerate}
\end{definition}
Throughout this section, $R$ denotes a  commutative Noetherian ring and ${\bf x} :=x_1,\ldots,x_d$ is a regular  $R$-sequence contained   in the Jacobson radical of $R$, unless otherwise
specified.

Before we state the main theorem of this section, we need the following lemma which plays a key role in the proof of that theorem.
\begin{lemma}\label{lem2}
Let  $I$  be a non-zero monomial ideal of $R$ with respect to  ${\bf x}$. Let $\{u_1,\ldots,u_r\}$  be a minimal set of   generators   of $I$.
Let $u_1=v_1w_1$ where $v_1$ and $w_1$ are co-prime monomials, i.e.,  $v_1\neq1\neq w_1$ and $\verb"gcd"(w_1,v_1)=1$. Then
$$ I=(v_1R+u_2R+\cdots +u_rR)\cap(w_1R+u_2R+\cdots +u_rR).$$
\end{lemma}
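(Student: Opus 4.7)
Set $K := u_2R + \cdots + u_rR$, so the two ideals on the right-hand side are $J_1 := v_1R + K$ and $J_2 := w_1R + K$. The inclusion $I \subseteq J_1 \cap J_2$ is immediate: $u_1 = v_1 w_1$ lies in $v_1 R \cap w_1 R$, and each $u_i$ with $i \geq 2$ sits in $K$; hence every generator of $I$ belongs to both $J_1$ and $J_2$.

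For the reverse inclusion $J_1 \cap J_2 \subseteq I$, the plan is to reduce to a divisibility check on monomials. I would first invoke, or else establish using the uniqueness of the exponent vector of a monomial noted in Definition~\ref{def1}(i), two structural facts about monomial ideals with respect to ${\bf x}$: that the intersection of two such ideals is again a monomial ideal, and that a monomial $m$ lies in a monomial ideal $(m_1, \dots, m_s)$ if and only if $m_j \mid m$ for some index $j$. The substantive content behind these two facts is the identity $vR \cap wR = vwR$ for co-prime monomials $v,w$, which follows from the permutability of regular sequences in the Jacobson radical of $R$: after rearranging ${\bf x}$ so that the variables occurring in $v$ precede those occurring in $w$, the product-pair $v,w$ forms a regular sequence of length two, and the standard identity $vR \cap wR = vwR$ for regular pairs applies.

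Granted these facts, it suffices to show that every monomial $m \in J_1 \cap J_2$ belongs to $I$. Membership of $m$ in $J_1$ forces $v_1 \mid m$ or $u_i \mid m$ for some $i \geq 2$, and membership in $J_2$ forces $w_1 \mid m$ or $u_i \mid m$ for some $i \geq 2$. If $u_i \mid m$ in either instance then $m \in u_i R \subseteq I$; otherwise both $v_1$ and $w_1$ divide $m$, and since $\gcd(v_1,w_1) = 1$ their least common multiple equals $v_1 w_1 = u_1$, so $u_1 \mid m$ and again $m \in I$.

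The main obstacle, and really the only non-routine aspect, is securing the two monomial structural facts outside the classical polynomial-ring setting. The Jacobson radical hypothesis on ${\bf x}$ is precisely what is needed to permute the regular sequence and conclude that the ideals generated by two co-prime monomials meet in the product; without that hypothesis the base case of the monomial membership criterion, on which the final case analysis depends, can fail. Once the criterion is in hand, the argument above is short and entirely symmetric in $v_1$ and $w_1$.
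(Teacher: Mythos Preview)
Your argument is correct and is essentially the paper's proof unpacked: where the paper invokes \cite[Proposition~1]{KS} to write $I_1\cap I_2$ as the ideal generated by all $\verb"lcm"$'s of the respective generators (so that $\verb"lcm"(v_1,w_1)=u_1$ and every other term is a multiple of some $u_i$), you instead use the equivalent ingredients --- that the intersection is monomial and that monomial membership is governed by divisibility (\cite[Corollary~3]{KS}) --- and perform the same case analysis by hand. The content and the dependence on the Kiyek--St\"uckrad structural results are identical.
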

\begin{proof}
Let
\begin{center}
$I_1:=v_1R+u_2R+\cdots +u_rR$ and $I_2:=w_1R+u_2R+\cdots +u_rR$.
\end{center}
 Then, it is clear that  $I\subseteq I_1\cap I_2$. On the other hand , in view of \cite[Proposition 1]{KS}, we have
\begin{eqnarray*}
I_1\cap I_2 &=& \verb"lcm"(v_1,w_1)R+\verb"lcm"(v_1,u_2)R+\cdots+\verb"lcm"(v_1,u_r)R\\
&+& \verb"lcm"(u_2,w_1)R+\verb"lcm"(u_2,u_2)R+\cdots+\verb"lcm"(u_2,u_r)R\\
&\vdots&\\
&+& \verb"lcm"(u_r,w_1)R+\verb"lcm"(u_r,u_2)R+\cdots+\verb"lcm"(u_r,u_r)R.
\end{eqnarray*}
Now, since $\verb"gcd"(w_1,v_1)=1$, it follows that $\verb"lcm"(w_1,v_1)=v_1w_1=u_1$, and so
\[
I_1\cap I_2\subseteq u_1R+u_2R+\cdots+u_rR=I.
\]
Therefore  $I=I_1\cap I_2$, as required.
\end{proof}

We are now ready to  prove the main theorem of this section, which generalizes and provides a short proof of the main results of  Heinzer et al. (see \cite[Theorem 4.9]{HMRS}
and  \cite[Theorems 4.1 and 4.10]{HRS}).
\begin{theorem}\label{thm3}
Let  $I$  be a non-zero monomial ideal of $R$ with respect to  ${\bf x}$. Then $I$ is an irredundant finite intersection generalized-parametric ideals, that is,
 $I=\bigcap\limits_{i=1}^m \mathfrak{q}_i$, where each $\mathfrak{q}_i$ is of the form $x^{e_1}_{i_1}R+\cdots +x^{e_k}_{i_k}R$. Moreover, such an irredundant presentation
 (up to the order of the factors) is unique; and if  ${\bf x}R$ is a prime ideal of $R$, then the ideal $\mathfrak{q}_i$ is $x_{i_1}R+\cdots+x_{i_k}R$-primary.
\end{theorem}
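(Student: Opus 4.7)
My plan is to establish existence by iterating Lemma \ref{lem2}, then handle primary-ness and uniqueness separately.

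For existence, I will induct on the complexity measure $c(I) := \sum_{i=1}^r |\operatorname{supp}(u_i)|$, where $\{u_1,\ldots,u_r\}$ is a minimal monomial generating set of $I$ and $|\operatorname{supp}(u_i)|$ counts the variables appearing with positive exponent in $u_i$. If $c(I)=r$ then each $u_i$ is a pure power of a single variable; since a minimal generating set can contain at most one power of any given variable, these powers are of distinct variables and $I$ is itself a generalized-parametric ideal. Otherwise some $u_i$ involves at least two variables; write $u_i = vw$ with $v,w$ coprime and both nontrivial, and apply Lemma \ref{lem2} to obtain $I = I_1 \cap I_2$, where the offending generator is replaced by the strictly simpler monomial $v$ in $I_1$ and $w$ in $I_2$, so $c(I_\ell) < c(I)$. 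The induction then concludes, and discarding components that contain the intersection of the others yields an irredundant decomposition.

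For primary-ness under the assumption that ${\bf x}R$ is prime, consider a generalized-parametric ideal $\mathfrak{q} = x^{e_1}_{i_1}R + \cdots + x^{e_k}_{i_k}R$ with radical $\mathfrak{p} = x_{i_1}R + \cdots + x_{i_k}R$. Since ${\bf x}$ lies in the Jacobson radical, every permutation of ${\bf x}$ is again an $R$-regular sequence, and raising the first $k$ entries to positive powers preserves this. Hence the complementary variables $x_{j_1},\ldots,x_{j_{d-k}}$ form a regular sequence in $R/\mathfrak{q}$, so no associated prime of $R/\mathfrak{q}$ contains any $x_{j_l}$, while every such associated prime must contain $\Rad(\mathfrak{q}) = \mathfrak{p}$. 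I would then descend primality of ${\bf x}R$ down to $\mathfrak{p}$ through the regular sequence $x_{j_1},\ldots,x_{j_{d-k}}$ --- in the spirit of the standard argument for regular systems of parameters --- and combine with the preceding observations to conclude that $\mathfrak{p}$ is the unique associated prime of $R/\mathfrak{q}$, i.e., $\mathfrak{q}$ is $\mathfrak{p}$-primary.

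For uniqueness, I will argue that each generalized-parametric ideal is irreducible as a monomial ideal (it cannot be written as a proper intersection of two larger monomial ideals, by inspection of its generators) and use this to pin down the decomposition: if $I=\bigcap_i \mathfrak{q}_i = \bigcap_j \mathfrak{q}'_j$ are two irredundant generalized-parametric decompositions, an exchange argument based on the coprime factorization structure of monomials together with irredundancy forces the two families to coincide up to a permutation. The main obstacle I anticipate is the primary-ness step --- specifically, showing $\mathfrak{p}$ is prime under only the hypothesis that ${\bf x}R$ is prime, since $R$ is not assumed Cohen--Macaulay; the Jacobson radical condition will be essential for lifting the necessary zero-divisor relations through the complementary regular sequence.
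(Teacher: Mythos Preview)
Your existence argument is essentially the paper's: iterate Lemma~\ref{lem2} until every generator is a pure power, then prune. Your complexity measure $c(I)$ is a clean way to certify termination, and passing to a minimal generating set of $I_\ell$ can only decrease $c$, so the induction is sound.

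Two places where your plan diverges from the paper deserve comment.

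\textbf{Primary-ness.} The paper does not argue this from scratch; it simply invokes \cite[Theorem~3.4]{Se}, which already establishes that $x_{i_1}^{e_1}R+\cdots+x_{i_k}^{e_k}R$ is $(x_{i_1}R+\cdots+x_{i_k}R)$-primary under the hypothesis that ${\bf x}R$ is prime. Your direct approach is more ambitious, and the obstacle you flag is real: deducing that $\mathfrak{p}=x_{i_1}R+\cdots+x_{i_k}R$ is prime from primality of ${\bf x}R$ via the complementary regular sequence is exactly the nontrivial content of that cited result. If you want a self-contained proof you will need to carry out that descent carefully; otherwise, citing \cite{Se} is the efficient route.

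\textbf{Uniqueness.} Here your sketch has a genuine gap. Irreducibility of each $\mathfrak{q}_i$ does not by itself force uniqueness of an irredundant intersection (think of embedded components in ordinary primary decomposition), and ``an exchange argument based on coprime factorization'' is not yet a proof. The paper's argument is concrete and worth adopting: given two irredundant decompositions $\bigcap_i \mathfrak{q}_i = \bigcap_j \mathfrak{q}'_j$, fix $\mathfrak{q}_i = x_1^{e_1}R+\cdots+x_k^{e_k}R$ and suppose no $\mathfrak{q}'_j$ is contained in it. Then for each $j$ choose a pure-power generator $x_{l_j}^{b_j}\in \mathfrak{q}'_j\setminus \mathfrak{q}_i$ (so either $l_j\notin\{1,\ldots,k\}$ or $b_j<e_{l_j}$), and set $u=\verb"lcm"(x_{l_1}^{b_1},\ldots,x_{l_s}^{b_s})$. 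Then $u\in\bigcap_j\mathfrak{q}'_j=I\subseteq\mathfrak{q}_i$, so some $x_t^{e_t}$ divides $u$; but by construction no exponent of $x_t$ in $u$ reaches $e_t$, a contradiction. This, together with the symmetric statement and irredundancy, gives the bijection between the two families. Note also that in the paper the irreducibility of generalized-parametric ideals (Corollary~\ref{cor1}) is \emph{deduced from} the uniqueness in Theorem~\ref{thm3}, not used to prove it, so reversing that dependence would require an independent irreducibility proof anyway.
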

\begin{proof}
Let  $I$  be a non-zero monomial ideal of $R$ with respect to  ${\bf x}$, and let $\{u_1,\ldots,u_r\}$  be a minimal system of   generators   of $I$.
If every $u_i$ has pure power, then being nothing to prove. So suppose that some $u_i$ is not a pure power, say $u_1$. Then we can write $u_1=vw$, where $v$ and $w$ are
 monomials with $\verb"gcd"(v,w)=1$. Hence, in view of Lemma \ref{lem2}, we have $I=I_1\cap I_2$, where
\begin{center}
$I_1:=vR+u_2R+\cdots +u_rR$ and $I_2:=wR+u_2R+\cdots +u_rR$.
\end{center}

 Now, if $\{v,u_2,\ldots,u_r\}$ or $\{w,u_2,\ldots,u_r\}$ contains an element which is not a pure power, we proceed as before and obtain   a finite number of steps a presentation of $I$
  as an intersection of monomial ideals generated by pure powers. That is $I$ is a finite intersection of   generalized-parameter ideals. Now, by omitting those ideals which contains the
  intersection of the others we end up with an irredundant intersection of generalized-parameter ideals.

Next, let $I=\mathfrak{q}_1\cap\ldots\cap \mathfrak{q}_r$ and $I=\mathfrak{q}'_1\cap\ldots\cap \mathfrak{q}'_s$ be two irredundant decomposition of $I$ as finite intersection
 of generated-parameter ideals.
 We need to show that $r=s$ and that $$\{\mathfrak{q}_1,\ldots,\mathfrak{q}_r\}=\{\mathfrak{q}'_1\,\ldots,\mathfrak{q}'_s\}.$$
In order to do so, we show that for each $i=1,\ldots,r$, there exists $j=1,\ldots,s$ such that $\mathfrak{q}'_j\subseteq \mathfrak{q}_i$, and by symmetry we then  also have
that for each $k=1,\ldots,s$, there exists $l=1,\ldots,r$ such that $\mathfrak{q}_l\subseteq \mathfrak{q}'_k$.  Now, let $i\in\{1,\ldots,r\}$, and we suppose that
 $\mathfrak{q}'_j\nsubseteq \mathfrak{q}_i$ for all $j=1,\ldots,s$, and look for a contradiction. Then, we may assume that $$\mathfrak{q}_i=x_1^{e_1}R+\cdots +x_k^{e_k}R,$$
 and  for every $j=1,\ldots,s$, there exists $x_{l_j}^{b_j}\in \mathfrak{q}'_j\setminus \mathfrak{q}_i$. Hence, it follows that either
$l_j\notin\{1,\ldots,k\}$ or $b_j<e_{l_j}$. Now, set
$$u=\verb"lcm"(x_{l_1}^{b_1},\ldots,x_{l_s}^{b_s}).$$
Then we have $u\in\bigcap\limits_{j=1}^s \mathfrak{q}'_j=I$, and so $u\in\bigcap\limits_{j=1}^r \mathfrak{q}_j.$ In particular
$$u\in \mathfrak{q}_i=x_{i_1}^{e_1}R+\cdots +x_{i_k}^{e_k}R.$$ Consequently, in view of    \cite[Corollary 3]{KS}, there exists $i\in\{1,\ldots,k\}$
such that $x_i^{e_i}|u$, and so $u\in x_i^{e_i}R$. Hence, it follows from   \cite[Remark 1]{KS} that there is $1\leq t\leq s$, $b_i\geq e_i$, which is a contradiction.

Finally, let  ${\bf x}R$ be a prime ideal of $R$. Then, since each
$\mathfrak{q}_i$  is of the form $x_{i_1}^{e_1}R+\cdots +x_{i_k}^{e_k}R$, it
follows from   \cite[Theorem 3.4]{Se} that $\mathfrak{q}_i$ is a
$x_{i_1}R+\cdots+x_{i_k}R$-primary ideal. This completes the proof.
\end{proof}

\begin{remark}\label{rem1}
Let ${\bf x} =x_1,\ldots,x_d$ be a regular $R$-sequence contained   in the Jacobson radical of $R$  and let  ${\bf x}R$ be a prime ideal of $R$. Let $I$ be a non-zero monomial ideal of $R$
with respect to ${\bf x}$. Then by virtue of  Theorem  \ref{thm3}, every element of $\Ass_R R/{I}$ is a monomial ideal. In particular every element $\mAss_R R/{I}$ is a monomial ideal.
Furthermore, since $\Rad(I)=\bigcap\limits_{\mathfrak{p}\in \mAss_R R/I} \mathfrak{p}$,  it follows that from  \cite[Proposition 1]{KS} that the ideal $\Rad(I)$ is monomial, too.

Next,   as $I^n$  for every integer $n\geq1$, is a monomial ideal, it follows from definition that $I^{(n)}$ is also a monomial ideal.\\

The first application of Theorem 2.3 shows that a  monomial ideal is irreducible  if and only if it is  a generalized-parameter ideal.
\end{remark}
\begin{corollary}\label{cor1}
Let ${\bf x} =x_1,\ldots,x_d$ be a regular $R$-sequence contained   in the Jacobson radical of $R$. Then a monomial ideal with respect to  ${\bf x}$ is irreducible  if and only if it is
 a generalized-parameter ideal.
\end{corollary}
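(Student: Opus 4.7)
The plan is to obtain both implications as direct consequences of Theorem~\ref{thm3}, exploiting both the existence and the uniqueness of its irredundant decomposition.

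For the forward direction, I would take an irreducible non-zero monomial ideal $I$ and apply Theorem~\ref{thm3} to write $I = \bigcap_{i=1}^{m} \mathfrak{q}_i$ as an irredundant finite intersection of generalized-parametric ideals. If $m \geq 2$, setting $I_1 := \mathfrak{q}_1$ and $I_2 := \bigcap_{i=2}^{m} \mathfrak{q}_i$ produces monomial ideals with $I = I_1 \cap I_2$, and irredundancy forces $I \subsetneq I_1$ and $I \subsetneq I_2$, contradicting the irreducibility of $I$. Hence $m = 1$, and $I = \mathfrak{q}_1$ is itself a generalized-parametric ideal.

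For the converse, suppose $I$ is generalized-parametric and assume, for contradiction, that $I = I_1 \cap I_2$ for monomial ideals with $I \subsetneq I_1$ and $I \subsetneq I_2$. Applying Theorem~\ref{thm3} to each $I_s$ ($s = 1, 2$) yields irredundant decompositions $I_s = \bigcap_j \mathfrak{q}_j^{(s)}$ into generalized-parametric ideals. Concatenating them expresses $I$ as a (possibly redundant) finite intersection of generalized-parametric ideals; by successively discarding any member containing the intersection of the remaining ones, we refine this to an irredundant such decomposition of $I$. Since $\{I\}$ is also an irredundant decomposition of $I$ of the required form, the uniqueness part of Theorem~\ref{thm3} forces $I$ to appear among the original family, say $I = \mathfrak{q}_{j_0}^{(1)}$. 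Then $I_1 \subseteq \mathfrak{q}_{j_0}^{(1)} = I$, and combining with $I \subseteq I_1$ gives $I = I_1$, contradicting $I \subsetneq I_1$.

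The only delicate step I foresee is the application of uniqueness in the converse: one must confirm that the pruned intersection of the combined family $\{\mathfrak{q}_j^{(1)}\} \cup \{\mathfrak{q}_j^{(2)}\}$ is an irredundant decomposition of the kind covered by Theorem~\ref{thm3}, and that the singleton $\{I\}$ qualifies as such. Both are routine verifications, so no ingredient beyond Theorem~\ref{thm3} should be required.
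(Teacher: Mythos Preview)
Your proposal is correct, and the converse direction is exactly the argument the paper gives: decompose $I_1$ and $I_2$ via Theorem~\ref{thm3}, concatenate, prune to an irredundant intersection, and invoke uniqueness against the singleton decomposition $\{I\}$ to force $I$ to coincide with one of the $\mathfrak{q}_j^{(s)}$, yielding the contradiction.

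For the forward direction you take a slightly different route from the paper. The paper argues directly from Lemma~\ref{lem2}: if some monomial generator $u_1$ of $I$ is not a pure power, write $u_1 = vw$ with $v,w$ coprime and $v\neq 1\neq w$, and Lemma~\ref{lem2} splits $I$ nontrivially, contradicting irreducibility; hence all generators are pure powers and $I$ is generalized-parametric. You instead invoke the existence part of Theorem~\ref{thm3} and observe that an irredundant decomposition of length $m\geq 2$ immediately gives a nontrivial splitting $I = \mathfrak{q}_1 \cap \bigl(\bigcap_{i\geq 2}\mathfrak{q}_i\bigr)$. Both arguments are short and valid; yours is a bit more uniform in that it draws everything from Theorem~\ref{thm3}, while the paper's version isolates the single combinatorial step (Lemma~\ref{lem2}) that actually drives the conclusion.
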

\begin{proof}
Let  $I$  be a  monomial ideal of $R$ with respect to   ${\bf x}$. If $I$ is irreducible and $\{u_1,\ldots,u_r\}$ is a monomial system of generators of $I$ such that some $u_i$ is not a pure
 power, say $u_1$, then   we can write $u_1=vw$, where $u$ and $w$ are coprime monomials and $v\neq1\neq w$. Then
$$ I=(vR+u_2R+\cdots +u_rR)\cap(wR+u_2R+\cdots +u_rR),$$
which is a contradiction.

Conversely, let $I$ be a generalized-parameter ideal, and suppose that $I$ is not irreducible. Then there exist integer $e_i\geq1$ such that $I=x_{i_1}^{e_1}R+\cdots+x_{i_k}^{e_k}R$, and that there are
 two monomial ideals $J$ and $K$ properly containing $I$ such that $I=J\cap K$. In view of the Theorem \ref{thm3}, we have $J=\bigcap\limits_{i=1}^r \frak q_i$ and
 $K=\bigcap\limits_{j=1}^s \frak q'_j$, where $\mathfrak{q}_i$ and $\mathfrak{q}'_i$  are generalized-parameter ideals. Hence
 $$I=(\bigcap\limits_{i=1}^r \mathfrak{q}_i)\cap(\bigcap\limits_{i=1}^s \mathfrak{q}'_i).$$ By omitting suitable ideals in the intersection on the right-hand side, we derive an irredundant
  of $I$.  Now the  uniqueness statement in Theorem \ref{thm3}, then implies that $I=\frak q_i$ or $I=\frak q'_j$, for some $i$ or $j$, which is a contradiction.
\end{proof}

An important question is when the ordinary and symbolic powers of an ideal in a commutative Noetherian ring are equal. This  question has been studied by several authors;
see for instance \cite{BH, ELS, Ho, HKV,  Mo}, and
has led to some interesting results.  The next result shows that the  ordinary and symbolic powers of a monomial prime ideal are equal.
\begin{corollary}\label{cor2}
Let ${\bf x} =x_1,\ldots,x_d$ be a regular $R$-sequence contained   in the Jacobson radical of $R$. Let $\mathfrak{p}$ be a monomial prime ideal of $R$ with respect to  ${\bf x}$.
Then  for all integer $n\geq1$, $\mathfrak{p}^{(n)}=\mathfrak{p}^n$.
\end{corollary}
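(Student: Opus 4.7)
The plan is to reduce the corollary to the well-known fact that powers of an ideal generated by a regular sequence all share the same set of associated primes.

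First, I would apply Corollary \ref{cor1}. Since $\mathfrak{p}$ is a prime ideal, it is irreducible in the usual ideal-theoretic sense (if $\mathfrak{p}=I\cap J$ then $\mathfrak{p}=I$ or $\mathfrak{p}=J$), hence in particular it is not the proper intersection of two strictly larger monomial ideals. Corollary \ref{cor1} therefore gives that $\mathfrak{p}$ is a generalized-parameter ideal, i.e.\ there exist a permutation $\tau$ of $\{1,\dots,d\}$, an integer $1\le s\le d$, and positive integers $e_1,\dots,e_s$ with
\[
\mathfrak{p}=x_{\tau(1)}^{e_1}R+\cdots+x_{\tau(s)}^{e_s}R.
\]

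Next I would use primeness to sharpen the shape of $\mathfrak{p}$. For each $j$, the inclusion $x_{\tau(j)}^{e_j}\in\mathfrak{p}$ together with $\mathfrak{p}\in\Spec R$ forces $x_{\tau(j)}\in\mathfrak{p}$. Combined with the obvious opposite inclusion, this yields $\mathfrak{p}=x_{\tau(1)}R+\cdots+x_{\tau(s)}R$. Because ${\bf x}$ is a regular sequence inside the Jacobson radical of $R$, any permutation and any subsequence of ${\bf x}$ is again a regular $R$-sequence; in particular $y_1:=x_{\tau(1)},\dots,y_s:=x_{\tau(s)}$ is a regular sequence generating $\mathfrak{p}$.

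The main step is then to show $\Ass_R(R/\mathfrak{p}^n)=\{\mathfrak{p}\}$ for every $n\ge 1$, which is equivalent to the desired equality $\mathfrak{p}^{(n)}=\mathfrak{p}^n$. Since $y_1,\dots,y_s$ is a regular sequence, the classical fact (e.g.\ from the associated graded ring of a regular sequence) provides an isomorphism of graded $R/\mathfrak{p}$-algebras
\[
\bigoplus_{n\ge 0}\mathfrak{p}^n/\mathfrak{p}^{n+1}\;\cong\;(R/\mathfrak{p})[T_1,\dots,T_s],
\]
so each $\mathfrak{p}^{n-1}/\mathfrak{p}^n$ is a free $R/\mathfrak{p}$-module and hence has $\{\mathfrak{p}\}$ as its only associated prime. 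Induction on $n$ using the short exact sequence
\[
0\longrightarrow \mathfrak{p}^{n-1}/\mathfrak{p}^n\longrightarrow R/\mathfrak{p}^n\longrightarrow R/\mathfrak{p}^{n-1}\longrightarrow 0
\]
then gives $\Ass_R(R/\mathfrak{p}^n)\subseteq\{\mathfrak{p}\}$, and the left-hand side is non-empty since $\mathfrak{p}^n\subsetneq R$. Thus $\mathfrak{p}^n$ is $\mathfrak{p}$-primary and $\mathfrak{p}^{(n)}=\mathfrak{p}^n$.

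The only delicate point is the freeness of the graded pieces $\mathfrak{p}^n/\mathfrak{p}^{n+1}$; this is a standard consequence of $(y_1,\dots,y_s)$ being a regular sequence (see e.g.\ Matsumura), and I would either cite it or, if a self-contained argument is preferred, pass to the localization $R_{\mathfrak{p}}$, which is a regular local ring of dimension $s$ with $\mathfrak{p}R_{\mathfrak{p}}$ generated by a regular system of parameters, and combine this local primariness with the fact that $R\setminus\mathfrak{p}$ contains no zero-divisor on $R/\mathfrak{p}^n$ (again from the regular-sequence structure). Everything else in the plan is routine bookkeeping.
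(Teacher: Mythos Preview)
Your proposal is correct and follows essentially the same route as the paper: show via Corollary~\ref{cor1} that the monomial prime $\mathfrak{p}$ is a generalized-parameter ideal, use primeness to strip the exponents so that $\mathfrak{p}$ is generated by a subset of the $x_i$'s (hence by a regular sequence, using that permutations of a regular sequence in the Jacobson radical remain regular), and then conclude that $\mathfrak{p}^n$ is $\mathfrak{p}$-primary. The only difference is cosmetic: the paper cites Kaplansky (Theorem~125 and Exercise~13) for the statement $\Ass_R R/\mathfrak{p}^n\subseteq\Ass_R R/\mathfrak{p}$, whereas you unpack that reference by invoking the polynomial structure of $\operatorname{gr}_{\mathfrak{p}}(R)$ and the short exact sequences for $R/\mathfrak{p}^n$.
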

\begin{proof}
Since $\mathfrak{p}$ is a monomial prime ideal, it follows that $\mathfrak{p}$ is a   generalized-parameter ideal. Hence we can assume that $\mathfrak{p}=x_{i_1}^{e_1}R+\cdots+x_{i_s}^{e_s}R$,
where $1\leq s\leq d$ and $e_i\in\mathbb{N}$. As $\mathfrak{p}$ is prime, it yields that $x_{i_1},\ldots,x_{i_s}\in\mathfrak{p}$, and so $\mathfrak{p}=x_{i_1}R+\cdots+x_{i_s}R$. Therefore, since $\mathfrak{p}$ is generated by a regular  $R$-sequence, it follows from   \cite[Theorem 125 and Exercise 13]{Ka} that $\Ass_R R/{\mathfrak{p}^n }\subseteq \Ass_R R/\mathfrak{p}$ for any $n\in\mathbb{N}$.  Consequently, $\Ass_R R/{\mathfrak{p}^n }=\{\mathfrak{p}\}$, for every $n\in\mathbb{N}$, and so $\mathfrak{p}^n$ is a $\mathfrak{p}$-primary ideal. Hence $\mathfrak{p}^{(n)}=\mathfrak{p}^n$, as required.
\end{proof}
\section{Symbolic Power of Squarefree  Monomial Ideals and Normality}
Recall that ${\bf x} =x_1,\ldots,x_d$ is a regular $R$-sequence
contained   in the Jacobson radical of $R$. In this section we study
some properties of the squarefree monomial ideals
 with respect to ${\bf x}$. Specifically, we show that if $I$ is a squarefree monomial   ideal, then $\Ass_RR/I^n \subseteq \Ass_RR/I$ if and only if  $I^{(n)}=I^n$, for every integer $n\geq1$.  Also,   it is shown that in this case all powesr of $I$ are integrally closed. We begin with the following definition.
\begin{definition}\label{def2}
A monomial $m=x_1^{e_1}\ldots x_d^{e_d}$  with respect to ${\bf x}$ is called {\it squarefree} if the all $e_i$ are 0 or 1. A monomial   ideal $I$ is called a {\it squarefree monomial ideal}
 if $I$ is generated by squarefree monomials. If $m=x_1^{e_1}\ldots x_d^{e_d}$ is a monomial with respect to ${\bf x}$, then the {\it support} of $m$, denoted by ${\rm supp}(m)$,
 is defined to be the set $\{j|j\in\{1,\ldots,d\}\, \textrm{and}\,\,  e_j\neq0\}$. Also the {\it radical}  of $m$, denoted by ${\rm rad}(m)$,
  is defined as ${\rm rad}(m):=\prod\limits_{j\in {\rm supp}(m)} x_j$. It is clear that if $m\in I$, then $({\rm rad}(m))^t\in I$, for some integer $t\geq1$.
  Also, it is easy to see that $m={\rm rad}(m)$ if and only if $m$ is a squarefree  monomial.
\end{definition}
In the next lemma, we can compute explicit the radical of a monomial
ideal.

\begin{lemma}\label{lem3}
Let  $I$  be a  monomial ideal of $R$ with respect to  ${\bf x}$, and let  $\{u_1,\ldots,u_r\}$  be a minimal set of  generators  of $I$. Then the set $\{{\rm rad}(u_i)|1\leq i\leq r\}$
is a set of generators of $\Rad(I)$.
\end{lemma}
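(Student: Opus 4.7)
The plan is to set $J := \sum_{i=1}^{r} {\rm rad}(u_i) R$ and establish $\Rad(I) = J$ by a two-sided inclusion, the easy direction being an exponent-raising argument and the hard direction being a reduction to monomials.

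For $J \subseteq \Rad(I)$, I would write each generator as $u_i = x_1^{e_{i1}} \cdots x_d^{e_{id}}$ and set $t_i := \max_{j} e_{ij}$. Then
\[
{\rm rad}(u_i)^{t_i} \;=\; \prod_{j \in {\rm supp}(u_i)} x_j^{t_i}
\]
is divisible by $u_i$ (since $t_i \geq e_{ij}$ whenever $e_{ij} \geq 1$, and no other variables appear in $u_i$), so ${\rm rad}(u_i)^{t_i} \in u_i R \subseteq I$, giving ${\rm rad}(u_i) \in \Rad(I)$.

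For the reverse inclusion $\Rad(I) \subseteq J$, the decisive reduction is that $\Rad(I)$ is itself a monomial ideal with respect to $\mathbf{x}$. This is supplied by Remark \ref{rem1}: via Theorem \ref{thm3} one writes $\Rad(I) = \bigcap_{\mathfrak{p} \in \mAss_R R/I} \mathfrak{p}$ as a finite intersection of monomial primes, and \cite[Proposition 1]{KS} keeps the class of monomial ideals closed under intersection. Granted this, it suffices to verify that every monomial $m = x_1^{a_1} \cdots x_d^{a_d}$ in $\Rad(I)$ lies in $J$. Picking $n \geq 1$ with $m^n \in I$, and noting that $m^n$ is a monomial while $I$ is generated by the monomials $u_1, \ldots, u_r$, the standard monomial-divisibility principle \cite[Remark 1]{KS} yields some index $i$ with $u_i \mid m^n$, that is, $e_{ij} \leq n a_j$ for all $j$. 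For each $j \in {\rm supp}(u_i)$ this forces $a_j \geq 1$, whence ${\rm supp}(u_i) \subseteq {\rm supp}(m)$ and therefore ${\rm rad}(u_i) \mid m$; thus $m \in {\rm rad}(u_i) R \subseteq J$.

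The main obstacle is precisely the reduction in the previous paragraph, namely knowing that $\Rad(I)$ is monomial: this tacitly uses the hypothesis that $\mathbf{x}R$ is prime (carried over from Section 2), since without it one cannot invoke Theorem \ref{thm3} to conclude that the minimal primes of $I$ are monomial. Once this is in hand, the proof collapses to the exponent comparison above together with the standard divisibility property of monomial ideals.
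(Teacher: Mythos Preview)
Your proof is correct and follows essentially the same route as the paper: both establish the easy inclusion by raising ${\rm rad}(u_i)$ to a suitable power, then reduce the reverse inclusion to monomials via Remark~\ref{rem1} and apply the divisibility principle from \cite{KS} (the paper cites \cite[Corollary~3]{KS} where you cite \cite[Remark~1]{KS}, and leaves the final support comparison as ``easy to see'' where you spell it out). You are also right that the appeal to Remark~\ref{rem1} tacitly imports the hypothesis that $\mathbf{x}R$ is prime, a point the paper's own proof leaves implicit.
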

\begin{proof}
Let $I=u_1R+\dots+u_rR$.  We  show that $\Rad(I)={\rm rad}(u_1)R+\dots +{\rm rad}(u_r)R$. To do this, as ${\rm rad}(u_j)\in \Rad(I)$ for every $1\leq j\leq r$, it follows that
$${\rm rad}(u_1)R+\dots+{\rm rad}(u_r)R\subseteq Rad(I).$$

Now in order to show the opposite inclusion, since $\Rad(I)$ is a monomial ideal with respect to $\bf x$ (see Remark \ref{rem1}), it is enough for us to show that for each monomial
$m\in \Rad(I)$ there exist monomial $m'$ and $1\leq i\leq r$ such that $m=m'{\rm rad}(u_i)$. To this end it follows from $m\in \Rad(I)$ that $m^l\in I$ for some integer $l\geq1$.
 Hence, in view of   \cite[Corollary 3]{KS}, there exists  a monomial $m'$ such that $m^l=m'u_j$ for some $1\leq j\leq r$. Now it is easy to see that this yields the desired conclusion.
\end{proof}
\begin{corollary}\label{cor3}
Let  $I$  be a  monomial ideal with respect to  ${\bf x}$. Then $\Rad(I)=I$ if and only if, $I$ is a squarefree monomial ideal with respect to ${\bf x}$.
In particular, every  monomial  prime ideal is squarefree.
\end{corollary}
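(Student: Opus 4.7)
The plan is to apply Lemma \ref{lem3} directly, combined with the observation recorded in Definition \ref{def2} that a monomial $m$ satisfies $m = \operatorname{rad}(m)$ if and only if $m$ is squarefree. In both directions I would fix a minimal monomial generating set $\{u_1, \ldots, u_r\}$ of $I$ and compare it with the generating set $\{\operatorname{rad}(u_1), \ldots, \operatorname{rad}(u_r)\}$ of $\Rad(I)$ supplied by Lemma \ref{lem3}.

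For the $(\Rightarrow)$ direction, assuming $\Rad(I) = I$, Lemma \ref{lem3} writes $I = \operatorname{rad}(u_1) R + \cdots + \operatorname{rad}(u_r) R$, which exhibits $I$ as generated by squarefree monomials; hence $I$ is squarefree by Definition \ref{def2}.

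For the $(\Leftarrow)$ direction, assume $I$ is squarefree, with a (possibly non-minimal) generating set $v_1, \ldots, v_s$ of squarefree monomials. The only mildly delicate step, and the one I expect to be the main obstacle, is to verify that the minimal monomial generators $u_1, \ldots, u_r$ of $I$ are themselves squarefree. I would argue as follows: by \cite[Corollary 3]{KS} applied to $u_i \in I$, some $v_j$ divides $u_i$ as a monomial, so $u_i = m v_j$; applying the same corollary to $v_j \in I$ gives a minimal generator $u_k$ dividing $v_j$, hence $u_k \mid u_i$. Since $\{u_1,\ldots,u_r\}$ is a minimal generating set, this forces $k = i$ and, using that each $u_i$ is a non-zerodivisor on $R$ (being a product of elements of the regular sequence $\mathbf{x}$), $u_i$ coincides with $v_j$ up to a unit and is therefore squarefree. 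Once this is in place, $u_i = \operatorname{rad}(u_i)$ for every $i$, so Lemma \ref{lem3} yields $\Rad(I) = u_1 R + \cdots + u_r R = I$.

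The ``in particular'' assertion about monomial primes is then immediate: any prime ideal $\mathfrak{p}$ is radical, so $\Rad(\mathfrak{p}) = \mathfrak{p}$, and the main equivalence of the corollary forces the monomial prime $\mathfrak{p}$ to be squarefree.
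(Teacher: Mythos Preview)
Your proposal is correct and follows the same route as the paper, which simply records that the corollary ``readily follows from Lemma~\ref{lem3}'' together with the observation in Definition~\ref{def2} that $m=\operatorname{rad}(m)$ characterizes squarefree monomials. You have made explicit the one point the paper leaves to the reader in the $(\Leftarrow)$ direction, namely that the \emph{minimal} monomial generators of a squarefree monomial ideal are themselves squarefree; your divisibility argument via \cite[Corollary~3]{KS} is valid (and in fact gives $u_i=v_j$ exactly, since the only unit monomial in $\mathbf{x}$ is $1$). A slightly shorter alternative, in the spirit of the paper's terse proof, is to note that the proof of Lemma~\ref{lem3} nowhere uses minimality of the generating set, so one may apply it directly to the given squarefree generators $v_1,\ldots,v_s$ to obtain $\Rad(I)=\operatorname{rad}(v_1)R+\cdots+\operatorname{rad}(v_s)R=v_1R+\cdots+v_sR=I$.
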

\begin{proof}
The assertion readily  follows from Lemma \ref{lem3}.
\end{proof}
\begin{corollary}\label{cor4}
Let the ideal ${\bf x}R$ be a prime ideal of $R$. Then every squarefree monomial ideal is an intersection of monomial prime ideals.
\end{corollary}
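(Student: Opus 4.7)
The plan is to combine Corollary \ref{cor3} with the uniqueness statement in Theorem \ref{thm3} and simply pass to radicals. Let $I$ be a squarefree monomial ideal with respect to $\mathbf{x}$. By Corollary \ref{cor3} we have $I = \Rad(I)$, so the task is reduced to exhibiting $\Rad(I)$ as an intersection of monomial prime ideals.

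First I would invoke Theorem \ref{thm3} to write
\[
I = \bigcap_{i=1}^{m} \mathfrak{q}_i,
\]
where each $\mathfrak{q}_i$ is a generalized-parametric ideal of the form $x^{e_1}_{i_1}R+\cdots+x^{e_k}_{i_k}R$. Since $\mathbf{x}R$ is assumed to be a prime ideal of $R$, the same theorem tells us that $\mathfrak{q}_i$ is a primary ideal whose radical is precisely $\mathfrak{p}_i := x_{i_1}R+\cdots+x_{i_k}R$, and $\mathfrak{p}_i$ is prime. Moreover, each $\mathfrak{p}_i$ is visibly a monomial ideal with respect to $\mathbf{x}$.

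Next I would take radicals on both sides of the decomposition, using that the radical commutes with finite intersections:
\[
I = \Rad(I) = \Rad\!\left(\bigcap_{i=1}^{m} \mathfrak{q}_i\right) = \bigcap_{i=1}^{m} \Rad(\mathfrak{q}_i) = \bigcap_{i=1}^{m} \mathfrak{p}_i,
\]
which displays $I$ as a finite intersection of monomial prime ideals, as desired.

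There is really no serious obstacle here; the only point worth checking carefully is that the primary components $\mathfrak{q}_i$ provided by Theorem \ref{thm3} do have monomial prime radicals of the claimed shape $x_{i_1}R+\cdots+x_{i_k}R$, but this is recorded explicitly in the last sentence of Theorem \ref{thm3} under the hypothesis that $\mathbf{x}R$ is prime. After that, the squarefree hypothesis enters in exactly one place---through the equality $I=\Rad(I)$ supplied by Corollary \ref{cor3}---which is what permits us to discard the exponents $e_j$ and arrive at an intersection of primes rather than an intersection of primaries.
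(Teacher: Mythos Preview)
Your proof is correct and follows essentially the same route as the paper's: decompose $I$ via Theorem~\ref{thm3} into generalized-parametric primaries, invoke Corollary~\ref{cor3} to get $I=\Rad(I)$, and pass to radicals term-by-term. One small remark: the \emph{uniqueness} clause of Theorem~\ref{thm3} that you mention in your opening plan is never actually used---only the existence of the decomposition and the identification of $\Rad(\mathfrak{q}_i)$ are needed.
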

\begin{proof}
Let $I$ be a squarefree monomial ideal with respect to ${\bf x}$, and let ${\bf x}R$ be a prime of $R$. Then in view of Theorem \ref{thm3},  we have $I=\bigcap\limits_{i=1}^m \mathfrak{q}_i$,
 where every $\mathfrak{q}_i$ is a  primary generalized-parameter ideal. Hence, for all $1\leq i\leq m,$ there exist positive integers $e_{i_1},\ldots,e_{i_k}$ such that
$$\mathfrak{q}_i=x_{i_1}^{e_{i_1}}R+\dots +x_{i_k}^{e_{i_k}}R.$$
 Since  $\Rad(\mathfrak{q}_i)=x_{i_1}R+\dots +x_{i_k}R$ is a monomial prime ideal, the desired conclusion follows from Corollary \ref{cor3}.
\end{proof}
\begin{corollary}\label{cor5}
Let  $I$  be a squarefree monomial  ideal with respect to  ${\bf x}$, and let ${\bf x}R$ be a  prime ideal of $R$. Then $I=\bigcap\limits_{\mathfrak{p}\in \mAss_R R/I} \mathfrak{p}$.
\end{corollary}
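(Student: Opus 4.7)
The plan is to combine the decomposition furnished by Corollary \ref{cor4} with the standard Noetherian identity $\Min(I) = \mAss_R R/I$. First I would invoke Corollary \ref{cor4} to write $I = \bigcap_{i=1}^m \mathfrak{p}_i$ with each $\mathfrak{p}_i$ a monomial prime ideal of $R$. By discarding any $\mathfrak{p}_i$ that properly contains another I may assume the intersection is irredundant in the strong sense that no $\mathfrak{p}_i$ may be removed: indeed, if $\mathfrak{p}_j \subsetneq \mathfrak{p}_i$ for some $i \neq j$, then $\bigcap_{k \neq i}\mathfrak{p}_k \subseteq \mathfrak{p}_j \subseteq \mathfrak{p}_i$, so $\mathfrak{p}_i$ is redundant and can be dropped. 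After this cleanup, the remaining primes form an antichain under inclusion.

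Next, I would verify that $\{\mathfrak{p}_1, \ldots, \mathfrak{p}_m\} = \Min(I)$. Any $\mathfrak{p} \in \Min(I)$ contains $I = \bigcap \mathfrak{p}_i$, hence contains some $\mathfrak{p}_i$, and minimality of $\mathfrak{p}$ forces the equality $\mathfrak{p} = \mathfrak{p}_i$; conversely, each $\mathfrak{p}_i$ is itself minimal over $I$ because the family is an antichain. Combining this with the standard Noetherian equality $\Min(I) = \mAss_R R/I$ then yields $I = \bigcap_{\mathfrak{p} \in \mAss_R R/I}\mathfrak{p}$, as required.

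An even shorter alternative route appeals directly to Corollary \ref{cor3}: since $I$ is squarefree we have $I = \Rad(I)$, and the general identity $\Rad(I) = \bigcap_{\mathfrak{p}\in \Min(I)}\mathfrak{p}$ together with $\Min(I)=\mAss_R R/I$ gives the conclusion in one line. I anticipate no genuine obstacle here, since every ingredient is already in place from the preceding results in this section; the only point that requires a bit of care is the irredundancy cleanup, and that is entirely routine.
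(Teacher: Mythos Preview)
Your proposal is correct and follows essentially the same route as the paper: both invoke Corollary~\ref{cor4} to write $I$ as an intersection of monomial primes and then identify those primes with $\mAss_R R/I$. Your version is in fact more careful, since the paper omits the irredundancy cleanup before asserting that each $\mathfrak{p}_i$ lies in $\mAss_R R/I$; your alternative one-line argument via Corollary~\ref{cor3} and $\Rad(I)=\bigcap_{\mathfrak{p}\in\Min(I)}\mathfrak{p}$ is also valid.
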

\begin{proof}
In view of Corollary \ref{cor4}, there exist monomial prime ideals $\mathfrak{p}_1,\ldots,\mathfrak{p}_r$ such that $I=\bigcap\limits_{i=1}^r \mathfrak{p}_i$. Now, it is easy to see that
 $\mathfrak{p}_i\in \mAss_R R/I$, for every $i=1,\ldots,r$. This completes the proof.
\end{proof}

The next lemma is almost certainly known, but we could not find a
reference for it, so it is explicit stated and proved here, since it
is needed for extending Corollary 2.6
 to an arbitrary squarefree monomial ideal; and also this used in the proof of the  main result of this section.
\begin{lemma}\label{prop1}
Let $\mathfrak{p}_1,\ldots,\mathfrak{p}_t$ be prime ideals of  $R$. Then for any integer $n\geq1$ we have
$$(\bigcap_{i=1}^t \frak{p}_i)^{(n)}=\bigcap^t_{i=1} \frak{p}_i^{(n)}.$$
\end{lemma}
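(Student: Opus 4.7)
The plan is to use the localization-contraction description of symbolic powers and reduce the lemma to a single computation of $IR_{\mathfrak{p}_i}$. Put $I := \bigcap_{i=1}^{t}\mathfrak{p}_i$. I would first discard non-minimal members of the family so as to assume that $\mathfrak{p}_1,\dots,\mathfrak{p}_t$ are pairwise incomparable; this is the situation actually needed in Corollary 3.5 (where the primes will be the minimal associated primes of a squarefree monomial ideal), and neither side of the claimed equality changes under this pruning. Under this hypothesis $\{\mathfrak{p}_1,\dots,\mathfrak{p}_t\}=\Min_R(R/I)$, so by the definition of symbolic power,
$$I^{(n)}=\bigcap_{i=1}^{t}\bigl(I^{n}R_{\mathfrak{p}_i}\cap R\bigr),\qquad \mathfrak{p}_i^{(n)}=\mathfrak{p}_i^{n}R_{\mathfrak{p}_i}\cap R.$$

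The lemma will therefore follow once I establish, for each $i$, the local identity $I^{n}R_{\mathfrak{p}_i}=\mathfrak{p}_i^{n}R_{\mathfrak{p}_i}$. Since localization is exact it commutes with finite intersections, and for every $j\neq i$ the incomparability gives $\mathfrak{p}_j\not\subseteq \mathfrak{p}_i$, hence $\mathfrak{p}_j R_{\mathfrak{p}_i}=R_{\mathfrak{p}_i}$. Consequently,
$$IR_{\mathfrak{p}_i}\;=\;\bigcap_{j=1}^{t}\mathfrak{p}_j R_{\mathfrak{p}_i}\;=\;\mathfrak{p}_i R_{\mathfrak{p}_i},$$
and raising to the $n$-th power yields $I^{n}R_{\mathfrak{p}_i}=\mathfrak{p}_i^{n}R_{\mathfrak{p}_i}$. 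Contracting back to $R$ and intersecting over $i$ then produces $I^{(n)}=\bigcap_{i=1}^{t}\mathfrak{p}_i^{(n)}$, as desired.

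The main obstacle is a minor one: the preliminary reduction to pairwise-incomparable primes. For the left-hand side it is immediate, since $I$ only sees the minimal members of the family; for the right-hand side one must check that, for the primes in question, discarding a non-minimal $\mathfrak{p}_k$ does not enlarge the intersection of symbolic powers, which in the intended setting can be done by comparing localizations term-by-term. Once this reduction is in hand, the core of the proof is a one-line exact-localization computation, whose geometric content is that, locally at each minimal prime $\mathfrak{p}_i$ of $I$, the ideal $I$ becomes $\mathfrak{p}_i R_{\mathfrak{p}_i}$, so that ordinary and symbolic powers are simply glued together from their evident local identities.
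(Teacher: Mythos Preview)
Your proof is correct and uses essentially the same localization idea as the paper: both rest on the fact that, for pairwise incomparable $\mathfrak{p}_i$, one has $IR_{\mathfrak{p}_i}=\mathfrak{p}_iR_{\mathfrak{p}_i}$ and hence $I^{n}R_{\mathfrak{p}_i}=\mathfrak{p}_i^{n}R_{\mathfrak{p}_i}$. The paper packages this as one element-wise containment together with a check that $\bigl(\bigcap_i\mathfrak{p}_i^{(n)}/J^{(n)}\bigr)_{\mathfrak{p}_j}=0$, whereas you invoke the contraction formula $I^{(n)}=\bigcap_i\bigl(I^{n}R_{\mathfrak{p}_i}\cap R\bigr)$ directly; the substance is the same, and you make explicit the incomparability reduction that the paper leaves tacit when it asserts $\Ass_R R/J^{(n)}=\{\mathfrak{p}_1,\dots,\mathfrak{p}_t\}$.
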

\begin{proof}
Let $J:=\bigcap_{i=1}^t\mathfrak{p}_i$ and let $x\in J^{(n)}$.  Then $sx\in {J^n}$  for some $s\in R\setminus\bigcup\limits_{i=1}^t\mathfrak{p}_i$. Hence $sx\in{\mathfrak{p}_i^n}$ for all
 $1\leq i\leq t$. Since $s\in R\setminus\mathfrak{p}_i$, for every $i=1,\ldots, t$ , it yields that $x\in\mathfrak{p}_i^{(n)}$, and so  $x\in\bigcap\limits_{i=1}^t\mathfrak{p}_i^{(n)}$.
  Consequently, $J^{(n)}\subseteq \bigcap\limits_{i=1}^t\mathfrak{p}_i^{(n)}$. Now, since $$\Ass_RR/J^{(n)}=\{\mathfrak{p}_1,\ldots,\mathfrak{p}_t\}$$ and for all $j=1,\ldots, t$,
\begin{center}
$(\bigcap_{i=1}^t\mathfrak{p}_i^{(n)}/J^{(n)})_{\mathfrak{p}_j}=0,$
\end{center}
it follows  that $\bigcap\limits_{i=1}^t\mathfrak{p}_i^{(n)}=J^{(n)}$, as required.
\end{proof}

\begin{corollary}\label{cor6}
Let  $I$  be a squarefree monomial ideal with respect to  ${\bf x}$, and suppose that ${\bf x}R$ is a  prime ideal of $R$. Then for any integer $k\geq1$,
we have $$I^{(k)}=\bigcap\limits_{\mathfrak{p}\in \mAss_R R/I} \mathfrak{p}^k.$$
\end{corollary}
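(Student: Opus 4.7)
The plan is to chain together three results already established in the paper: Corollary \ref{cor5}, which decomposes a squarefree monomial ideal as an intersection of its minimal primes; Lemma \ref{prop1}, which commutes symbolic powers with finite intersections of prime ideals; and Corollary \ref{cor2}, which identifies symbolic and ordinary powers of monomial prime ideals. No new ideas beyond assembling these pieces should be required.

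First, I would invoke Corollary \ref{cor5} to write
\[
I \;=\; \bigcap_{\mathfrak{p}\in \mAss_R R/I} \mathfrak{p},
\]
noting that by Remark \ref{rem1} each $\mathfrak{p}\in\mAss_R R/I$ is a monomial ideal, and since $I$ is squarefree (hence radical) each such $\mathfrak{p}$ is a monomial prime ideal. Enumerate them as $\mathfrak{p}_1,\dots,\mathfrak{p}_r$. Second, I would apply Lemma \ref{prop1} to the finite family of primes $\mathfrak{p}_1,\ldots,\mathfrak{p}_r$ to obtain, for every $k\geq 1$,
\[
I^{(k)} \;=\; \Bigl(\bigcap_{i=1}^{r}\mathfrak{p}_i\Bigr)^{(k)} \;=\; \bigcap_{i=1}^{r}\mathfrak{p}_i^{(k)}.
\]
Third, since each $\mathfrak{p}_i$ is a monomial prime ideal with respect to ${\bf x}$, Corollary \ref{cor2} gives $\mathfrak{p}_i^{(k)}=\mathfrak{p}_i^{k}$ for all $i$ and all $k\geq 1$. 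Substituting this into the preceding display yields
\[
I^{(k)} \;=\; \bigcap_{i=1}^{r}\mathfrak{p}_i^{k} \;=\; \bigcap_{\mathfrak{p}\in \mAss_R R/I} \mathfrak{p}^{k},
\]
which is exactly the claim.

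There is no real obstacle, since the work has already been done in Sections 2 and 3; one only needs to verify that the hypotheses of the three invoked results are met. The only small thing to check is that the finite set of minimal primes fed into Lemma \ref{prop1} consists of genuine prime ideals (not merely primary ideals), and that is immediate from Corollary \ref{cor5} together with Corollary \ref{cor3} identifying squarefree monomial primary components with monomial primes.
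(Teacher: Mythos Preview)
Your proposal is correct and follows exactly the same three-step argument as the paper: apply Corollary~\ref{cor5} to write $I$ as the intersection of its minimal primes, use Lemma~\ref{prop1} to pass to symbolic powers, and then replace each $\mathfrak{p}^{(k)}$ by $\mathfrak{p}^{k}$ via Corollary~\ref{cor2}. The additional remarks you make about why the minimal primes are genuine monomial primes are accurate but not strictly needed, since Corollary~\ref{cor5} already presents $I$ as an intersection of primes.
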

\begin{proof}
In view of  Corollary \ref{cor5}, we have
$$I=\bigcap_{\mathfrak{p}\in \mAss_R R/I} \mathfrak{p}.$$
Hence, it follows from Lemma \ref{prop1} that
$$I^{(k)}=\bigcap\limits_{\mathfrak{p}\in \mAss_R R/I} \mathfrak{p}^{(k)}.$$
Now, as $\mathfrak{p}^{(k)}=\mathfrak{p}^k$ by Corollary \ref{cor2}, we deduce that $I^{(k)}=\bigcap\limits_{\mathfrak{p}\in \mAss_R R/I} \mathfrak{p}^k$, as required.
\end{proof}

We are now ready to state and prove the following main result of this section which is an extension of  Corollary 2.6 for an arbitrary squarefree monomial ideal.
\begin{theorem}\label{thm4}
Let  $I$  be a  squarefree monomial ideal, and assume that the ideal ${\bf x}R$  of $R$ is prime. Then the following conditions are equivalent:
\begin{enumerate}
  \item [(i)] $I^{(k)}=I^k$ for all integers $k\geq1$.
  \item [(ii)] $\Ass_R R/I^k\subseteq \mAss_R R/I$ for all integers $k\geq1$.
\end{enumerate}
Moreover, if the equivalent   conditions hold, then $I$ is a normal ideal.
\end{theorem}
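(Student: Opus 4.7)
The plan is to handle the biconditional (i) $\Leftrightarrow$ (ii) as a quick corollary of the formula $I^{(k)} = \bigcap_{\mathfrak{p}\in\mAss_R R/I} \mathfrak{p}^k$ supplied by Corollary \ref{cor6}, and then to derive normality by reducing to the claim that each monomial prime power $\mathfrak{p}^k$ is integrally closed, which will be the bulk of the work. For (i) $\Rightarrow$ (ii): under (i), Corollary \ref{cor6} presents $I^k$ as an intersection of $\mathfrak{p}$-primary ideals $\mathfrak{p}^k$ (each $\mathfrak{p}^k$ is $\mathfrak{p}$-primary by the argument in the proof of Corollary \ref{cor2}); since distinct minimal primes are pairwise incomparable, prime avoidance makes this decomposition irredundant, so $\Ass_R R/I^k = \mAss_R R/I$. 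For (ii) $\Rightarrow$ (i): the inclusions $\mAss_R R/I = \mAss_R R/I^k \subseteq \Ass_R R/I^k \subseteq \mAss_R R/I$ collapse to equalities, so $I^k$ has no embedded components, and the definition of the symbolic power immediately gives $I^{(k)} = I^k$.

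For the normality claim, assuming the equivalent conditions, Corollary \ref{cor6} again yields $I^k = \bigcap_{\mathfrak{p} \in \mAss_R R/I} \mathfrak{p}^k$ for every $k \geq 1$. Since an intersection of integrally closed ideals is integrally closed, it suffices to show each $\mathfrak{p}^k$ is integrally closed. By Corollary \ref{cor3} and Theorem \ref{thm3}, every $\mathfrak{p} \in \mAss_R R/I$ has the form $\mathfrak{p} = x_{i_1}R + \cdots + x_{i_s}R$, and $x_{i_1}, \ldots, x_{i_s}$ is a subsequence of ${\bf x}$, hence itself a regular $R$-sequence of length $s$. Combining this regular sequence with Krull's height theorem gives $\height(\mathfrak{p}) = s$, and then $R_\mathfrak{p}$ is Cohen--Macaulay of dimension $s$ with $\mathfrak{p} R_\mathfrak{p}$ minimally generated by $s$ elements, so $R_\mathfrak{p}$ is in fact a regular local ring.

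The main obstacle is then to prove that $(\mathfrak{p} R_\mathfrak{p})^k$ is integrally closed in the regular local ring $R_\mathfrak{p}$. My approach is to use the order function $\nu$ associated to the maximal ideal $\mathfrak{p} R_\mathfrak{p}$: since $R_\mathfrak{p}$ is regular, its associated graded ring is a polynomial ring over the residue field, hence a domain, so $\nu$ is multiplicative and finite on nonzero elements (by Krull intersection). Given $a \in \overline{(\mathfrak{p} R_\mathfrak{p})^k}$ with integral equation $a^n + c_1 a^{n-1} + \cdots + c_n = 0$ where $c_i \in (\mathfrak{p} R_\mathfrak{p})^{ki}$, if $\nu(a) < k$ then $\nu(a^n) = n\nu(a)$ is strictly less than $\nu(c_i a^{n-i}) \geq ki + (n-i)\nu(a)$ for every $i \geq 1$, which makes the identity $a^n = -\sum_{i\geq 1} c_i a^{n-i}$ impossible by order comparison. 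Hence $\nu(a) \geq k$, and $(\mathfrak{p} R_\mathfrak{p})^k$ is integrally closed. Descending to $R$ is then routine: $\mathfrak{p}^k$ is $\mathfrak{p}$-primary, so $\mathfrak{p}^k = \mathfrak{p}^k R_\mathfrak{p} \cap R$, and consequently $\overline{\mathfrak{p}^k} \subseteq \overline{\mathfrak{p}^k R_\mathfrak{p}} \cap R = \mathfrak{p}^k R_\mathfrak{p} \cap R = \mathfrak{p}^k$, which finishes the proof.
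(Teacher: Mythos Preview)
Your argument is correct. The equivalence (i)$\Leftrightarrow$(ii) is essentially the same as the paper's: both directions amount to observing that (i) forces $\Ass_R R/I^k$ to coincide with the minimal primes of $I$, while (ii) says $I^k$ has no embedded components and therefore equals its symbolic power. (One small point: when you assert that $x_{i_1},\ldots,x_{i_s}$ is again a regular sequence because it is a ``subsequence'' of ${\bf x}$, you are tacitly using that regular sequences contained in the Jacobson radical are permutable; this is standard, but worth a word.)

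Where you genuinely diverge from the paper is in the normality argument. The paper stays inside the monomial calculus: it invokes the Kiyek--St\"uckrad result that $\overline{I^k}$ is again a monomial ideal, so it suffices to test monomials $m$ with $m^l\in I^{kl}$, and then it checks the exponent inequalities directly using the decomposition $I^k=\bigcap_{\mathfrak p}\mathfrak p^k$. Your route is structural instead: you show that for each $\mathfrak p\in\mAss_R R/I$ the localization $R_{\mathfrak p}$ is a regular local ring (height $=s$, maximal ideal generated by $s$ elements), use the order valuation to see that powers of the maximal ideal in a regular local ring are integrally closed, and then contract back to $R$ via $\mathfrak p^k=\mathfrak p^kR_{\mathfrak p}\cap R$. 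Both are valid; your approach avoids the external reference to \cite{KS} and the somewhat terse exponent computation at the end of the paper's proof, at the cost of importing a bit of regular-local-ring machinery. The paper's approach has the virtue of keeping everything phrased in terms of monomials on the given regular sequence.
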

\begin{proof}
The implication  (i)$\Longrightarrow$(ii) follows from the definition of $I^{(k)}$.

 In order to show (ii)$\Longrightarrow$(i),  since, $I$ is squarefree, it follows from Corollary \ref{cor5} that
$I=\bigcap\limits_{\mathfrak{p}\in \Ass_R R/I} \mathfrak{p}$.  Hence,
$$\Ass_R R/I=\mAss_R R/I.$$
Therefore $\Ass_R R/I=\mAss_R R/I^k$, and so it follows from $\Ass_R R/I^k\subseteq\Ass_R R/I$ that
 $$\Ass_R R/I^k=\mAss_R R/I^k.$$
 Let $I^k=\bigcap\limits_{i=1}^m \mathfrak{q}_i$
 be an irredundant primary decomposition of $I^k$ with $\mathfrak{q}_i$ is $\mathfrak{p}_i$-primary. Then $$\Ass_R R/I^k=\{\mathfrak{p}_1,\ldots,\mathfrak{p}_m\}.$$
 Now, because of
 \begin{center}
 $I^{(k)}=\bigcap\limits_{\mathfrak{p}_i\in \mAss_R R/I^k} \mathfrak{q}_i$\,\,\,  and \,\,\,  $\Ass_R R/I^k=\mAss_R R/I^k$,
 \end{center}
it follows that $I^{(k)}=I^k$. This proves the equivalence of (i) and (ii).

Finally, in order to complete the proof, we have to show that for each $k$, the ideal $I^k$ is integrally closed, i.e., $\overline{I^{k}}=I^k$.
To do this, in view of   \cite[Proposition 4]{KS}, it is enough for us to show that for a monomial $m\in\mathscr{M}$ in which $m^l\in I^{kl}$ for some integer $l\geq1$, we have $m\in I^k$;
note   that by virtue of   \cite[Theorem 1]{KS}  the ideal $\overline{I^{k}}$ is monomial. Now, since by assumption ${I^{(j)}}=I^j$ for all integer $j\geq1$, and according to
 Corollary \ref{cor6}, $$I^{(j)}=\bigcap\limits_{\mathfrak{p}\in \mAss_R R/I} \mathfrak{p}^j,$$   it  suffices to prove that whenever
 $$m^{l}\in\bigcap\limits_{\mathfrak{p}\in \mAss_R R/I} \mathfrak{p}^{kl},$$ for some integer $l\geq1$, we have  $m\in\bigcap\limits_{\mathfrak{p}\in \mAss_R R/I} \mathfrak{p}^{k}$.

 To do this, let $m=x_1^{e_1}\ldots x_d^{e_d}$.  Then $m^l=x_1^{e_1l}\ldots x_d^{e_dl}$.
  Now, it easily follows from $m^{l}\in\bigcap\limits_{\mathfrak{p}\in \mAss_R R/I} \mathfrak{p}^{kl}$ that $e_il\geq kl$ for all $i=1,\ldots,d$
  in which $x_i\in\mathfrak{p}$ and all $\mathfrak{p}\in \mAss_R R/I$. This then implies that $e_i\geq k$ for all $i=1,\ldots,d$ for which $x_i\in\mathfrak{p}$
  and all $\mathfrak{p}\in \mAss_R R/I$, which yields the desired conclusion.
\end{proof}

\begin{center}
{\bf Acknowledgments}
\end{center}
The authors would like to thank Professor Hossein Zakeri for his reading of the first draft and valuable suggestions.
We also would like to thank the Institute for Research in Fundamental Sciences (IPM), for the financial support.

\end{document}